\def\CC{{\mathbb C}} 
\newtheorem{theorem}{Theorem}[section]
\newtheorem{lemma}[theorem]{Lemma}
\newtheorem{corollary}[theorem]{Corollary}
\theoremstyle{definition}
\numberwithin{equation}{section}
\begin{document}
\baselineskip=15.5pt

\title[Symplectic structure on moduli of projective structures]{On the symplectic
structure over the moduli space of projective structures on a surface}

\author[I. Biswas]{Indranil Biswas}

\address{Mathematics Department, Shiv Nadar University, NH91, Tehsil Dadri,
Greater Noida, Uttar Pradesh 201314, India}

\email{indranil.biswas@snu.edu.in, indranil29@gmail.com}

\subjclass[2010]{14H15, 32G15, 14H60}

\keywords{Projective structure, character variety, symplectic form, Teichm\"uller space}

\date{}

\begin{abstract}
The moduli space of projective structures on a compact oriented surface $\Sigma$ has a holomorphic symplectic
structure, which is constructed by pulling back, using the monodromy map, the Atiyah--Bott--Goldman
symplectic form on the character variety ${\rm Hom}(\pi_1(\Sigma), \, 
\text{PSL}(2, {\mathbb C}))/\!\!/\text{PSL}(2, {\mathbb C})$.
We produce another construction of this symplectic form. This construction shows that the symplectic form
on the moduli space is actually algebraic. Note that the monodromy map is only holomorphic and \textit{not}
algebraic, so the first construction does not give abgebraicity of the pulled back form.
\end{abstract}

\maketitle

\section{Introduction}

Let $\Sigma$ be a compact oriented $C^\infty$ surface of genus $g$. The corresponding Teichm\"uller space
${\rm Teich}(\Sigma)$ is the quotient of ${\rm Com}(\Sigma)$, the space of $C^\infty$ complex structures on
$\Sigma$ compatible with its orientation, by the natural action of the group ${\rm Diff}^0(\Sigma)$ given by
the connected component, containing the identity element, of the group ${\rm Diff}^+(\Sigma)$ defined by
the orientation preserving diffeomorphisms of $\Sigma$. The mapping class group
${\rm Mod}(\Sigma)\,:=\, {\rm Diff}^+(\Sigma)/{\rm Diff}^0(\Sigma)$ acts on ${\rm Teich}(\Sigma)$, and
the quotient ${\mathcal M}_g\,:=\, {\rm Teich}(\Sigma)/{\rm Mod}(\Sigma)$ is the moduli space of smooth
complex projective curves of genus $g$.

A projective structure on $\Sigma$ is defined by giving a coordinate atlas, where each coordinate function 
has target ${\mathbb C}{\mathbb P}^1$, such that all the transition functions are M\"obius transformations 
(same as holomorphic automorphisms of ${\mathbb C}{\mathbb P}^1$). Let $\text{Proj}(\Sigma)$ denote the space
of $C^\infty$ projective structures on $\Sigma$ compatible with its orientation. A projective structure
on $\Sigma$ produces a flat $\text{PSL}(2, {\mathbb C})$--bundle on $\Sigma$ together with a (non-flat)
reduction of structure group to a Borel subgroup of $\text{PSL}(2, {\mathbb C})$. 
Sending a projective structure to the monodromy homomorphism of the corresponding flat
$\text{PSL}(2, {\mathbb C})$--bundle, we obtain a holomorphic map from
${\mathcal P}(\Sigma)\,\ :=\,\, {\rm Proj}(\Sigma)/{\rm Diff}^0(\Sigma)$ to the
$\text{PSL}(2, {\mathbb C})$--character variety for $\Sigma$. This map is a local biholomorphism,
and hence the pullback of the Atiyah--Bott--Goldman symplectic form on the character variety gives
a holomorphic symplectic form on ${\mathcal P}(\Sigma)$. The symplectic form on the character variety
if preserved by the action of ${\rm Mod}(\Sigma)$, and also the monodromy map is ${\rm Mod}(\Sigma)$--equivariant.
Therefore, the above holomorphic symplectic form on ${\mathcal P}(\Sigma)$ descends to a holomorphic
symplectic form $\Theta_g$ on ${\mathcal P}(\Sigma)_g\,:=\, {\mathcal P}(\Sigma)/{\rm Mod}(\Sigma)$.

Now, ${\mathcal P}(\Sigma)_g$ is an algebraic orbifold; in fact, it an algebraic torsor over
${\mathcal M}_g$ for the cotangent bundle $\Omega_{{\mathcal M}_g}$. The above monodromy map is
\textit{not} algebraic because the degree of the map is infinite. 

Our aim here is to give an alternative construction of the symplectic form $\Theta_g$ on
${\mathcal P}(\Sigma)_g$. Our construction shows that the form $\Theta_g$ is actually algebraic.

\section{A parameter space for projective structures}\label{se4.1}

\subsection{Projective structure}

A holomorphic coordinate chart on a connected Riemann surface $X$ is a pair
$(U,\, \phi)$, where $U\, \subset\, X$ is an open subset and $\phi\,
:\, U\, \longrightarrow\, {\mathbb C}{\mathbb P}^1$ is a holomorphic embedding. A holomorphic
coordinate atlas on $X$ is a collection of coordinate charts $\{(U_i,\, \phi_i)\}_{i\in I}$
such that $X\,=\, \bigcup_{i\in I} U_i$. A projective structure on $X$ is given by a holomorphic
coordinate atlas $\{(U_i,\, \varphi_i)\}_{i\in I}$ such that
for every $i,\, j\, \in\, I\times I$ and every nonempty connected
component $V_c\, \subset\, U_i\bigcap U_j$, there is an element
$A^c_{j,i} \, \in\, \text{Aut}({\mathbb C}{\mathbb P}^1)
\,=\, \text{PSL}(2, {\mathbb C})$ whose restriction to $\phi_i(V_c)$ coincides
with $(\phi_j\circ\phi^{-1}_i)\big\vert_{\phi_i(V_c)}$.
Two holomorphic coordinate atlases $\{(U_i,\, \phi_i)\}_{i\in I}$ and $\{(U_i,\, \phi_i)\}_{i\in I'}$
satisfying the above condition are called {\it equivalent} if their union
$\{(U_i,\, \phi_i)\}_{i\in I\cup
I'}$ also satisfies the above condition. A {\it projective structure} on $X$ is an equivalence class of
holomorphic coordinate atlases satisfying the above condition (see \cite{Gu}).

Fix the Borel subgroup
\begin{equation}\label{bsg}
B\, :=\, \Big\{
\begin{pmatrix}
a & b\\
c &d
\end{pmatrix}
\, \in\, \text{PSL}(2,{\mathbb C})\, \mid\, b\, =\, 0\Big\}\, \subset\, \text{PSL}(2,{\mathbb C})\, .
\end{equation}
Let $P\, \longrightarrow\, X$ be a holomorphic principal $\text{PSL}(2,{\mathbb C})$--bundle on $X$
equipped with a holomorphic connection $D$. Let $P_B\, \subset\, P$
be a holomorphic reduction of structure group, to the subgroup $B$ in \eqref{bsg}, given by a
holomorphic section
\begin{equation}\label{e1}
\sigma\, :\, X\, \longrightarrow\, P/B
\end{equation}
of the natural projection $f\, :\, P/B\, \longrightarrow\, X$. The connection $D$ on $P$ induces a connection on the
fiber bundle $f$, which, in turn, decomposes the tangent bundle $T(P/B)$ as
$T(P/B)\,=\, T_f\oplus {\mathcal H}$,
where $T_f\, \subset\, T(P/B)$ is the relative tangent bundle for the projection $f$ and ${\mathcal H}\, \subset\,
T(P/B)$ is the horizontal subbundle for the connection on $P/B$. Let
\begin{equation}\label{e2}
\widetilde{\sigma}\, :\, TX\, \longrightarrow\, \sigma^*T_f
\end{equation}
be the composition of homomorphisms
$$
TX\, \stackrel{d\sigma}{\longrightarrow}\,\sigma^* T(P/B)\,=\, \sigma^*T_f\oplus \sigma^*{\mathcal H}
\, \longrightarrow\, \sigma^*T_f
$$
constructed using the above decomposition of $T(P/B)$, where $d\sigma$ is the differential of $\sigma$ in
\eqref{e1}.

A projective structure on $X$ is a triple $(P,\, P_B,\, D)$ as above such that the homomorphism
$\widetilde{\sigma}$ in \eqref{e2} is an isomorphism \cite{Gu}.

The holomorphic cotangent bundle of $X$ will be denoted by $K_X$. The extensions of ${\mathcal O}_X$ by $K_X$ are
parametrized by $$H^1(X,\, \text{Hom}({\mathcal O}_X,\, K_X))\,=\, H^1(X,\, K_X)\,=\, \mathbb C.$$
Let
\begin{equation}\label{z1}
0\, \longrightarrow\, K_X\, \longrightarrow\, V \, \longrightarrow\, {\mathcal O}_X
\, \longrightarrow\, 0
\end{equation}
be the extension corresponding to $1\, \in\, H^1(X,\, \text{Hom}({\mathcal O}_X,\, K_X))\,=\, \mathbb C$.

Let
\begin{equation}\label{e3}
P^0\, \longrightarrow\, X
\end{equation}
be the principal $\text{PSL}(2,{\mathbb C})$--bundle given by ${\mathbb P}(V)$; the fiber
of $P^0$ over any $x\, \in\, X$ is the space of all holomorphic isomorphisms ${\mathbb C}{\mathbb P}^1\,
\longrightarrow\, P^0_x$. The line subbundle $K_X\, \hookrightarrow\, V$ in \eqref{z1} produces a reduction
of structure group
\begin{equation}\label{f1}
P^0_B\, \subset\, P^0
\end{equation}
to the Borel subgroup $B\,\subset\, \text{PSL}(2,{\mathbb C})$ in \eqref{bsg}. The fiber of $P^0_B$ over any
$x\, \in\, X$ is the space of all isomorphisms ${\mathbb C}{\mathbb P}^1\,
\longrightarrow\, P^0_x$ that take the point ${\mathbb C}\cdot e_2\, \in\, {\mathbb C}{\mathbb P}^1$ to the
point of $P^0_x$ given by the line $(K_X)_x\, \subset\, V_x$, where $\{e_1,\, e_2\}$ is the
standard basis of ${\mathbb C}^2$.

For any projective structure $(P,\, P_B,\, D)$ on $X$, the pair $(P,\, P_B)$ coincides with $(P^0,\, P^0_B)$ 
\cite{Gu}. Furthermore, for every connection $D$ on $P^0$, the triple $(P^0,\, P^0_B,\, D)$ is actually a 
projective structure on $X$. From these it follows that the space of all projective structures on $X$ is an 
affine space modeled on the vector space $H^0(X,\, K^{\otimes 2}_X)$.

\subsection{A symplectic form}

Fix a compact oriented $C^\infty$ surface $\Sigma$ of genus $g\,\ge\, 2$.
Denote by ${\rm Diff}^+(\Sigma)$ the group
of all orientation preserving diffeomorphisms of $\Sigma$. Let ${\rm Diff}^0(\Sigma)\, \subset\,
{\rm Diff}^+(\Sigma)$ be the
connected component containing the identity element. Then the quotient
$$
{\rm Mod}(\Sigma)\,:=\,\pi_0({\rm Diff}^+(\Sigma))\,=\, {\rm Diff}^+(\Sigma)/{\rm Diff}^0(\Sigma)
$$
is known as the mapping class group. Let ${\rm Com}(\Sigma)$ denote the space of $C^\infty$ complex
structures on $\Sigma$ compatible with its orientation. The quotient ${\rm Teich}(\Sigma)\,:=\,
{\rm Com}(\Sigma)/{\rm Diff}^0(\Sigma)$ for the natural action of ${\rm Diff}^0(\Sigma)$ on
${\rm Com}(\Sigma)$ is the Teichm\"uller space for $\Sigma$. The
group ${\rm Mod}(\Sigma)$ acts property discontinuously on ${\rm Teich}(\Sigma)$. The corresponding orbifold
$$
{\mathcal M}_g\,=\, {\rm Teich}(\Sigma)/{\rm Mod}(\Sigma)
$$
is the moduli space of smooth complex projective curves of genus $g$.

Let ${\rm Proj}(\Sigma)$ denote the space of all projective structures on the $C^\infty$ surface $\Sigma$
compatible with its orientation. The group ${\rm Diff}^+(\Sigma)$ has a natural action on it. Consider the
quotient
$$
{\mathcal P}(\Sigma)\,\ :=\,\, {\rm Proj}(\Sigma)/{\rm Diff}^0(\Sigma).
$$
The natural map ${\rm Proj}(\Sigma)\, \longrightarrow\, {\rm Com}(\Sigma)$ produces a projection
\begin{equation}\label{vp}
\varpi\,\, :\,\, {\mathcal P}(\Sigma)\,\,\longrightarrow\,\, {\rm Teich}(\Sigma)
\end{equation}
The action of the group ${\rm Diff}^+(\Sigma)$ on ${\rm Proj}(\Sigma)$ produces an action of
${\rm Mod}(\Sigma)$ acts on ${\mathcal P}(\Sigma)$. Let
\begin{equation}\label{v2}
{\mathcal P}_g\,\,:=\,\, {\mathcal P}(\Sigma)/{\rm Mod}(\Sigma)
\end{equation}
be the corresponding orbifold. The map $\varpi$ in \eqref{vp} produces a map
\begin{equation}\label{vp2}
\varpi'\, :\, {\mathcal P}_g\, =\, {\mathcal P}(\Sigma)/{\rm Mod}(\Sigma)\,\longrightarrow\,{\rm Teich}(\Sigma)/{\rm Mod}(\Sigma)
\,=\, {\mathcal M}_g.
\end{equation}

There is a universal family ${\mathcal X}_{{\rm Teich}(\Sigma)}/{\rm Teich}(\Sigma)$ of Riemann surfaces
on ${\rm Teich}(\Sigma)$.
The fiber of the map $\varpi$ in \eqref{vp} over any $t\, \in\, {\rm Teich}(\Sigma)$ is an affine space for
the space all quadratic differentials on the Riemann surface $({\mathcal X}_{{\rm Teich}(\Sigma)})_t$. In fact,
${\mathcal P}(\Sigma)$ is a holomorphic torsor over ${\rm Teich}(\Sigma)$ for the holomorphic cotangent
bundle $\Omega_{{\rm Teich}(\Sigma)}$ of ${\rm Teich}(\Sigma)$.

We note that ${\mathcal P}_g$ has a natural algebraic structure such that the map $\varpi'$ in \eqref{vp2}
is algebraic. In fact, ${\mathcal P}_g$ is an algebraic torsor over ${\mathcal M}_g$ for the
cotangent bundle $\Omega_{{\mathcal M}_g}$ of ${\mathcal M}_g$.

Given $x\,\in \,\Sigma$, the fundamental group $\pi_1(\Sigma,\, x)$ has $2g$ generators that 
are subject to a single relation. So the space of all homomorphisms from $\pi_1(\Sigma, \,x)$ to ${\rm PSL}
(2,\CC)$ that lift to a homomorphism from $\pi_1(\Sigma, \,x)$ to ${\rm SL}(2,\CC)$ constitute a Zariski closed
subset of ${\rm PSL}(2,\CC)^{2g}$. This Zariski closed subset
is evidently affine. The action of ${\rm PSL}(2, \CC)$ by conjugation
on this affine space of homomorphisms is also in 
the affine category. The homomorphisms that are irreducible in the sense that their image in ${\rm PSL}(2,\CC)$ is 
Zariski dense, and lift to a homomorphism from $\pi_1(\Sigma, \,x)$ to ${\rm SL}(2,\CC)$,
make up an open-dense subset ${\mathcal R}'(\Sigma)$. It has the property that ${\rm PSL}(2,\CC)$ acts freely on 
it. In fact, we can form this orbit variety in the affine category. This affine variety is called the 
\emph{character variety} of $\Sigma$ and is denoted by ${\mathcal R}(\Sigma)$. This
${\mathcal R}(\Sigma)$ is smooth of dimension $6(g-1)$. Since 
the inner automorphisms of $\pi_1(\Sigma,\, x)$ are killed by this procedure, the variety ${\mathcal R}(\Sigma)$ is 
actually independent of the choice of our base point $x\,\in\,\Sigma$ and (thus) comes with an
action of the mapping class group 
${\rm Mod}(\Sigma)$ (which acts on $\pi_1(\Sigma,\, x)$ by outer automorphisms). We can interpret
${\mathcal R}(\Sigma)$ as the moduli space of topologically trivial flat principal ${\rm PSL}(2,\CC)$-bundles
on $\Sigma$ with irreducible monodromy.

The character variety ${\mathcal R}(\Sigma)$ also comes with a closed nondegenerate $2$-form (to which we refer simply
as an \emph{algebraic symplectic 
structure}), \cite{AB}, \cite{Go}, that we shall denote by
\begin{equation}\label{e10}
\Theta_{\Sigma}\, \in\, H^0({\mathcal R}(\Sigma),\, \bigwedge\nolimits^2 T^*{\mathcal R}(\Sigma)).
\end{equation}
It is natural, and therefore it is preserved by the action of ${\rm Mod}(\Sigma)$. In
\cite{BL} an alternative construction of $\Theta_{\Sigma}$ is given.

Since a projective structure on $\Sigma$ determines a principal ${\rm PSL}(2,\CC)$-bundle with a
flat connection, we have a natural, ${\rm Mod}(\Sigma)$-equivariant monodromy map
\begin{equation}\label{e11}
\Psi\, :\, {\mathcal P}(\Sigma) \, \longrightarrow\, {\mathcal R}(\Sigma).
\end{equation}
This map $\Psi$ is known to be a local (holomorphic)
isomorphism \cite[p.~272, Theorem]{Hu}, \cite{He}, but its fibers are not finite (this follows from
\cite[p.~141, Theorem 6.2]{Ba} (see \cite[p.~120]{Ba}) and \cite{GKM}). The pullback
of the form in \eqref{e10}
$$
\Psi^*\Theta_{\Sigma}\, \in\, H^0({\mathcal P}(\Sigma),\, \bigwedge\nolimits^2 T^*{\mathcal P}(\Sigma))
$$
is a holomorphic symplectic form on ${\mathcal P}(\Sigma)$. Since $\Theta_{\Sigma}$
is preserved by the action of ${\rm Mod}(\Sigma)$, and $\Psi$ is
${\rm Mod}(\Sigma)$-equivariant, it follows that $\Psi^*\Theta_{\Sigma}$
descends to a holomorphic symplectic form on ${\mathcal P}_g$ (see \eqref{v2})
\begin{equation}\label{e12}
\Theta_g\,\, \in\,\, H^0({\mathcal P}_g,\, \bigwedge\nolimits^2 T^*{\mathcal P}_g).
\end{equation}

\section{Symplectic form on moduli of projective structures}

Recall that ${\mathcal P}_g$ has an algebraic structure.
We will construct an algebraic $2$--form on ${\mathcal P}_g$. For that we need to describe the
tangent bundle $T{\mathcal P}_g$.

\subsection{Infinitesimal deformations of projective structures}

Let $X$ be a smooth complex projective curve $X$ of genus $g\, \geq\, 2$, and let $\delta$ be a projective structure
on $X$. Then there is a holomorphic differential operator of order three
\begin{equation}\label{delta}
\Delta_\delta\, :\, TX\, \longrightarrow\, K^{\otimes 2}_X\, ;
\end{equation}
we will briefly recall its construction (see \cite[pp.~264--265]{Hu}). Since the space of projective structures on
an open subset $U\, \subset\, X$ is an affine space for $H^0(U,\, K^{\otimes 2}_U)$, given $\chi\,\in\,
H^0(U,\, TU)$, and any coordinate function $\phi$ on $U$ compatible with $\delta$, the Lie derivative
$L_\chi\phi$ of $\phi$ with respect to $\chi$ is an element of $H^0(U,\, K^{\otimes 2}_U)$. This element
\begin{equation}\label{dde}
L_\chi\phi\, \in\, H^0(U,\, K^{\otimes 2}_U)
\end{equation}
is independent of the choice of $\phi$. The differential operator
$\Delta_\delta$ in \eqref{delta} sends $\chi$ to $L_\chi\phi$.

We will describe an alternative construction of $\Delta_\delta$.
The natural isomorphism of $\text{Aut}({\mathbb C}{\mathbb P}^1)$ with $\text{PSL}(2,{\mathbb C})$ gives an
isomorphism
\begin{equation}\label{liso}
\text{sl}(2,{\mathbb C})\, \stackrel{\sim}{\longrightarrow}\,
H^0({\mathbb C}{\mathbb P}^1,\, T{\mathbb C}{\mathbb P}^1),
\end{equation}
where $\text{sl}(2,{\mathbb C})$ is the Lie algebra of $\text{PSL}(2,{\mathbb C})$,
that takes the Lie algebra operation on $\text{sl}(2,{\mathbb C})$ to the Lie bracket operation
of vector fields. For any $n\, \geq\, 0$, let
\begin{equation}\label{ee}
\eta_n\, :\, {\mathbb C}{\mathbb P}^1\times H^0({\mathbb C}{\mathbb P}^1,\, T{\mathbb C}{\mathbb P}^1)
\, \longrightarrow\, J^n(T{\mathbb C}{\mathbb P}^1)
\end{equation}
be the natural restriction map of global sections, where ${\mathbb C}{\mathbb P}^1\times
H^0({\mathbb C}{\mathbb P}^1,\, T{\mathbb C}{\mathbb P}^1)$ is the trivial vector bundle
on ${\mathbb C}{\mathbb P}^1$ with fiber
$H^0({\mathbb C}{\mathbb P}^1,\, T{\mathbb C}{\mathbb P}^1)$. The map $\eta_2$ is an isomorphism.
So the composition of homomorphisms $$\eta_3\circ(\eta_2)^{-1}\, :\, J^2(T{\mathbb C}{\mathbb P}^1)
\,\longrightarrow\, J^3(T{\mathbb C}{\mathbb P}^1)$$ has the following property: Consider the natural jet sequence
\begin{equation}\label{ee2}
0\,\longrightarrow\, K^{\otimes 2}_{{\mathbb C}{\mathbb P}^1} \,\stackrel{\alpha}{\longrightarrow}\,
J^3(T{\mathbb C}{\mathbb P}^1)\,\stackrel{\beta}{\longrightarrow}\, 
J^2(T{\mathbb C}{\mathbb P}^1)\,\longrightarrow\, 0\, .
\end{equation}
We have $\beta\circ (\eta_3\circ(\eta_2)^{-1})\,=\, {\rm Id}_{J^2(T{\mathbb C}{\mathbb P}^1)}$.
We note that $\eta_3\circ(\eta_2)^{-1}$ is equivariant for the actions of $\text{PSL}(2,{\mathbb C})$
on $J^3(T{\mathbb C}{\mathbb P}^1)$ and $J^2(T{\mathbb C}{\mathbb P}^1)$ given by the action
of $\text{PSL}(2,{\mathbb C})$ on ${\mathbb C}{\mathbb P}^1$.

Let
\begin{equation}\label{ee3}
\Delta_{{\mathbb P}^1}\, :\, J^3(T{\mathbb C}{\mathbb P}^1)\,\longrightarrow\,
K^{\otimes 2}_{{\mathbb C}{\mathbb P}^1}
\end{equation}
be the unique homomorphism such that $\text{kernel}(\Delta_{{\mathbb P}^1})\,=\, 
\eta_3((\eta_2)^{-1}(J^2(T{\mathbb C}{\mathbb P}^1)))$ and
\begin{equation}\label{j1}
\Delta_{{\mathbb P}^1}\circ\alpha\,=\, {\rm Id}_{K^{\otimes 2}_{{\mathbb C}{\mathbb P}^1}},
\end{equation}
where $\alpha$ is the homomorphism in \eqref{ee2}.
So $\Delta_{{\mathbb P}^1}\circ\alpha$ is a holomorphic differential operator
of order three from $T{\mathbb C}{\mathbb P}^1$ to $K^{\otimes 2}_{{\mathbb C}{\mathbb P}^1}$ which is invariant
under the action of $\text{PSL}(2,{\mathbb C})$. The symbol of $\Delta_{{\mathbb P}^1}\circ\alpha$ is a holomorphic section
of $\text{Hom}(T{\mathbb C}{\mathbb P}^1,\, K^{\otimes 2}_{{\mathbb C}{\mathbb P}^1})\otimes (T{\mathbb C}{\mathbb P}^1)^{\otimes 3}
\,=\, {\mathcal O}_{{\mathbb C}{\mathbb P}^1}$. From \eqref{j1} it follows immediately that 
the symbol of $\Delta_{{\mathbb P}^1}\circ\alpha$ is the constant function $1$ on ${\mathbb C}{\mathbb P}^1$.

Consequently, given any holomorphic coordinate chart $(U,\, \phi)$ on $X$ compatible with the projective
structure $\delta$, we have a differential operator $TU\, \longrightarrow\, K^{\otimes 2}_U$ which is simply
the transport of $\Delta_{{\mathbb P}^1}$ using the biholomorphism $\phi$. Since $\Delta_{{\mathbb P}^1}$
is $\text{PSL}(2,{\mathbb C})$--invariant, these locally defined differential operators on $X$ 
patch together compatibly to produce a global third order differential operator
$TX\, \longrightarrow\, K^{\otimes 2}_X$. This differential operator coincides with $\Delta_\delta$ in
\eqref{delta}. Indeed, this follows immediately by comparing their expressions in terms of holomorphic
coordinate functions on $X$ compatible with the projective structure $\delta$.

So, $\Delta_\delta\, :\, J^3(X)\,\longrightarrow\, K^{\otimes 2}_X$ gives a splitting of the
jet sequence
\begin{equation}\label{jx}
0\,\longrightarrow\, K^{\otimes 2}_X \,\longrightarrow\,
J^3(TX)\,\longrightarrow\, J^2(TX)\,\longrightarrow\, 0
\end{equation}
on $X$. From this it follows immediately that the symbol of the differential operator $\Delta_\delta$ in
\eqref{delta} is the constant function $1$
(see also \cite[p.~273]{Hu}). Consequently, the sheaf of solutions of $\Delta_\delta$ is a complex local system
on $X$; let
\begin{equation}\label{lsx}
\Lambda_\delta\, \longrightarrow\, X
\end{equation}
be the local system given by the sheaf of solutions of $\Delta_\delta$
(the notation of \cite{Hu} is being used).
Since $\Delta_\delta$ is given by a splitting of \eqref{jx}, the holomorphic vector bundle underlying the local
system $\Lambda_\delta$ is identified with $J^2(TX)$.

Consider the holomorphic principal $\text{PSL}(2,{\mathbb C})$--bundle $P^0$ in \eqref{e3}. Let $D$ be the
holomorphic connection on $P^0$ corresponding to the projective structure $\delta$. The connection
on the adjoint bundle $\text{ad}(P^0)$ (for $P^0$) induced by $D$ will be denoted by $\widetilde D$. The
complex local system on $X$ given by the
sheaf of flat sections of $\text{ad}(P^0)$, for the connection $\widetilde D$, will be denoted by
\begin{equation}\label{j5}
\underline{\rm ad}(P^0).
\end{equation}

\begin{lemma}\label{lem2}
The two complex local systems $\Lambda_\delta$ (constructed in \eqref{lsx}) and
$\underline{\rm ad}(P^0)$ (in \eqref{j5}) on $X$ are canonically identified.
\end{lemma}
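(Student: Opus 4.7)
The plan is to construct the canonical isomorphism using coordinate charts compatible with $\delta$ and then verify that the local identifications glue. First note that both sides are rank--$3$ complex local systems: $\Lambda_\delta$ because $\Delta_\delta$ is a third--order linear operator with constant (hence nowhere vanishing) symbol, and $\underline{\rm ad}(P^0)$ because $\dim_{\mathbb C}\text{sl}(2,{\mathbb C})=3$.

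Fix a chart $\phi : U \to {\mathbb C}{\mathbb P}^1$ on $U \subset X$ compatible with $\delta$. Since $\Delta_\delta|_U$ is the transport of $\Delta_{{\mathbb P}^1}$ via $\phi$, and $\ker(\Delta_{{\mathbb P}^1})$ equals $\eta_3((\eta_2)^{-1}(J^2(T{\mathbb C}{\mathbb P}^1)))$, a dimension count shows that for every simply connected $W\subset {\mathbb C}{\mathbb P}^1$ the restriction map from $H^0({\mathbb C}{\mathbb P}^1,T{\mathbb C}{\mathbb P}^1)$ to the space of local solutions of $\Delta_{{\mathbb P}^1}$ on $W$ is an isomorphism. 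Combined with \eqref{liso}, this canonically identifies $\Lambda_\delta|_U$ with the constant sheaf on $U$ with fiber $\text{sl}(2,{\mathbb C})$. On the other hand, the chart $\phi$ identifies the triple $(P^0,P^0_B,D)|_U$ with the pullback via $\phi$ of the standard triple on ${\mathbb C}{\mathbb P}^1$ (trivial principal bundle with its standard Borel reduction and trivial flat connection); consequently $(\text{ad}(P^0),\widetilde{D})|_U$ is canonically the trivial flat bundle with fiber $\text{sl}(2,{\mathbb C})$, and $\underline{\rm ad}(P^0)|_U$ is likewise identified with the constant sheaf with fiber $\text{sl}(2,{\mathbb C})$.

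For two compatible charts $(U_i,\phi_i)$, $(U_j,\phi_j)$ and a connected component $V_c \subset U_i \cap U_j$, the transition $\phi_j \circ \phi_i^{-1}$ is the restriction of the element $A^c_{j,i} \in \text{PSL}(2,{\mathbb C})$. On the $\Lambda_\delta$ side it acts on the fiber $\text{sl}(2,{\mathbb C})$ via the pushforward action of $A^c_{j,i}$ on $H^0({\mathbb C}{\mathbb P}^1,T{\mathbb C}{\mathbb P}^1)$, which under \eqref{liso} is the adjoint action $\text{Ad}(A^c_{j,i})$. On the $\underline{\rm ad}(P^0)$ side, the two flat trivializations of $P^0$ differ by the locally constant gauge transformation $A^c_{j,i}$, whose action on $\text{ad}(P^0) = P^0 \times_{\text{Ad}} \text{sl}(2,{\mathbb C})$ is again $\text{Ad}(A^c_{j,i})$. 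The transition cocycles coincide, so the local isomorphisms assemble into a global canonical isomorphism $\Lambda_\delta \cong \underline{\rm ad}(P^0)$.

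The main technical point requiring care is the assertion used in the second paragraph that a compatible chart $\phi$ yields a flat local trivialization of $(P^0,D)$ pulled back from the standard triple on ${\mathbb C}{\mathbb P}^1$. This is built into the equivalence recalled in the text between the atlas-based and triple-based formulations of a projective structure: the connection $D$ is constructed precisely so that local developing data coming from compatible charts are $D$-flat. Once this compatibility is unpacked, the remainder of the proof is formal bookkeeping with transition cocycles.
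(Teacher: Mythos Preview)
Your proof is correct and follows essentially the same strategy as the paper's: both arguments locally identify each of the two local systems with the constant sheaf $\text{sl}(2,{\mathbb C})$ via a compatible chart, and then observe that the transition cocycles on overlaps coincide (you compute them explicitly as $\text{Ad}(A^c_{j,i})$, while the paper phrases this as the $\text{PSL}(2,{\mathbb C})$--equivariance of the identification $\ker(\Delta_{{\mathbb P}^1})\cong {\mathbb C}{\mathbb P}^1\times\text{sl}(2,{\mathbb C})$). Your explicit verification of the transition on both sides and your remark about the flat trivialization of $(P^0,D)$ induced by a compatible chart are helpful elaborations, but the logical skeleton is the same.
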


\begin{proof}
The kernel of the projection
$\Delta_{{\mathbb P}^1}$ in \eqref{ee3} is $J^2(T{\mathbb C}{\mathbb P}^1)$, because it is given by a
splitting of the exact sequence in \eqref{ee2}. In particular, the vector bundle corresponding to the
local system defined by the sheaf of solutions of $\Delta_{{\mathbb P}^1}$ is $J^2(T{\mathbb C}{\mathbb P}^1)$.
Recall that $\eta_2$ identifies $J^2(T{\mathbb C}{\mathbb P}^1)$
with ${\mathbb C}{\mathbb P}^1\times H^0({\mathbb C}{\mathbb P}^1,\, T{\mathbb C}{\mathbb P}^1)\,=\,
{\mathbb C}{\mathbb P}^1\times \text{sl}(2,{\mathbb C})$ (see \eqref{liso}). So the kernel of the projection
$\Delta_{{\mathbb P}^1}$ in \eqref{ee3} is identified with ${\mathbb C}{\mathbb P}^1\times \text{sl}(2,{\mathbb C})$.
This identification is evidently $\text{PSL}(2,{\mathbb C})$--equivariant; since $\Delta_{{\mathbb P}^1}$
is $\text{PSL}(2,{\mathbb C})$--invariant, the kernel of $\Delta_{{\mathbb P}^1}$ is equipped
with an action of $\text{PSL}(2,{\mathbb C})$. This action of $\text{PSL}(2,{\mathbb C})$ on
$\text{kernel}(\Delta_{{\mathbb P}^1})\,=\, {\mathbb C}{\mathbb P}^1\times \text{sl}(2,{\mathbb C})$
coincides with the diagonal action of $\text{PSL}(2,{\mathbb C})$.

Let $(U,\, \phi)$ be any coordinate chart on $X$ compatible with the projective structure $\delta$. Using
$\phi$, the kernel of $\Delta_{{\mathbb P}^1}$ produces a constant subsheaf of $TU$. Similarly,
the constant sheaf ${\mathbb C}{\mathbb P}^1\times \text{sl}(2,{\mathbb C})$ produces a constant sheaf
on $U$ with stalk $\text{sl}(2,{\mathbb C})$, using $\phi$. The above isomorphism of
the kernel of $\Delta_{{\mathbb P}^1}$ with ${\mathbb C}{\mathbb P}^1\times \text{sl}(2,{\mathbb C})$
produces an isomorphism of these two local systems on $U$.

The local system $\Lambda_\delta$ on $X$ is constructed by patching together these locally defined
constant subsheaves of $TU$, while
the local system $\underline{\rm ad}(P^0)$ is constructed by patching together these locally defined
constant sheaves with stalk $\text{sl}(2,{\mathbb C})$; the patchings are done using the transition functions
for the coordinate charts compatible with $\delta$. Now from the above observation that the kernel
$\Delta_{{\mathbb P}^1}$ is $\text{PSL}(2,{\mathbb C})$--equivariantly identified with ${\mathbb C}{\mathbb P}^1
\times \text{sl}(2,{\mathbb C})$ it follows immediately that these locally defined isomorphisms on $X$ produce
a global isomorphism between $\Lambda_\delta$ and $\underline{\rm ad}(P^0)$.
\end{proof}

Recall that the vector bundles on $X$ underlying the local systems $\Lambda_\delta$ and
$\underline{\rm ad}(P^0)$ are $J^2(TX)$ and ${\rm ad}(P^0)$ respectively. Therefore, Lemma \ref{lem2}
produces an isomorphism of $J^2(TX)$ with ${\rm ad}(P^0)$; to clarify, this isomorphism depends on the
projective structure $\delta$. Using this isomorphism between $J^2(TX)$ and ${\rm ad}(P^0)$, the bilinear
trace form on $\text{sl}(2,{\mathbb C})$ produces a fiberwise nondegenerate symmetric pairing
\begin{equation}\label{fd}
f_\delta\, :\, J^2(TX)\otimes J^2(TX)\, \longrightarrow\, {\mathcal O}_X\,
\end{equation}

Let
\begin{equation}\label{eb}
{\mathcal B}_\bullet \, :\, {\mathcal B}_0\, :=\, TX\, \stackrel{\Delta_\delta}{\longrightarrow}\,
{\mathcal B}_1\, :=\, K^{\otimes 2}_X
\end{equation}
be the two--term complex on $X$, where $\Delta_\delta$ is the differential operator in \eqref{delta}; to clarify,
the sheaf ${\mathcal B}_i$ in \eqref{eb} is at the $i$-th position of the complex.

The space of infinitesimal deformations of the pair $(X,\, \delta)$ are parametrized by the hypercohomology
${\mathbb H}^1({\mathcal B}_\bullet)$ \cite[p.~266]{Hu}. From the short exact sequence of sheaves
$$
0\,\longrightarrow\, \Lambda_\delta \,\longrightarrow\, TX\,\stackrel{\Delta_\delta}{\longrightarrow}\,
K^{\otimes 2}_X\,\longrightarrow\, 0
$$
(see \eqref{lsx}) we know that
\begin{equation}\label{c1}
{\mathbb H}^1({\mathcal B}_\bullet)\,=\, H^1(X,\, \Lambda_\delta)\, .
\end{equation}

Consider the short exact sequence of complexes
$$
\begin{matrix}
&& 0 && 0\\
&&\Big\downarrow && \Big\downarrow\\
&& 0 & \longrightarrow & K^{\otimes 2}_X\\
&& \Big\downarrow &&\,\,\,\,\,\, \Big\downarrow{\rm id}\\
{\mathcal B}_\bullet & : & TX& \stackrel{\Delta_\delta}{\longrightarrow} & K^{\otimes 2}_X\\
&&\,\,\,\,\,\, \Big\downarrow{\rm id} && \Big\downarrow\\
&& TX & \longrightarrow & 0\\
&&\Big\downarrow && \Big\downarrow\\
&& 0 && 0
\end{matrix}
$$
on $X$. It produces a short exact sequence of hypercohomologies
\begin{equation}\label{ehc}
0\,=\, H^0(X,\, TX) \,\longrightarrow\, H^0(X,\, K^{\otimes 2}_X) \,\stackrel{\alpha_1}{\longrightarrow}\,
{\mathbb H}^1({\mathcal B}_\bullet)
\end{equation}
$$
\stackrel{\alpha_2}{\longrightarrow}\, H^1(X,\, TX)\,\longrightarrow\,
H^1(X,\, K^{\otimes 2}_X)\,=\, 0\, .
$$
The homomorphism $\alpha_1$ in \eqref{ehc} corresponds to moving the projective structure keeping the
Riemann surface $X$ fixed \cite[p.~267\,(iii)]{Hu}, and the homomorphism $\alpha_2$ corresponds to the
forgetful map that sends an infinitesimal deformation of $(X,\, \delta)$ to the infinitesimal deformation of
$X$ corresponding to it obtained by simply forgetting the projective structure \cite[p.~266\,(ii)]{Hu}.

\subsection{A two--form on the moduli space of projective structures}

As before, $X$ is a smooth complex projective curve $X$ of genus $g\, \geq\, 2$, and $\delta$
is a projective structure on $X$. Consider the differential operator $\Delta_\delta$ in
\eqref{delta}. We will show that it satisfies a certain identity.

The natural contraction homomorphism
\begin{equation}\label{c2}
TX\otimes K^{\otimes 2}_X\, \longrightarrow\, K_X
\end{equation}
will be denoted by ``$\cdot$''. Let
\begin{equation}\label{p0}
p_0\, :\, J^3(TX)\, \longrightarrow\, TX
\end{equation}
be the natural projection.

The ${\mathcal O}_X$--linear homomorphism $\Delta_\delta\, :\, J^3(TX)\, \longrightarrow\, K^{\otimes 2}_X$
in \eqref{delta} produces an ${\mathcal O}_X$--linear homomorphism
\begin{equation}\label{H1}
H_1\, :\, \text{Sym}^2(J^3(TX))\, \longrightarrow\, K_X
\end{equation}
that sends any $(v\otimes w)+(w\otimes v)\, \in\, J^3(TX)_x\otimes J^3(TX)_x$, $x\, \in\, X$, to
$$p_0(v)\cdot \Delta_\delta(w)+ p_0(w)\cdot \Delta_\delta(v)\, \in\, (K_X)_x,$$ where $p_0$ is the
projection in \eqref{p0}, and ``$\cdot$'' is the pairing in \eqref{c2}.

We will construct another homomorphism $H_2$ from $\text{Sym}^2(J^3(TX)\otimes J^3(TX))$ to $K_X$. For that
we first recall that given vector bundles $V$ and $W$ on $X$, and $i\, \geq\, 0$, there are natural homomorphisms
\begin{equation}\label{fi}
f_1\, :\, J^{i+1}(V)\, \longrightarrow\, J^1(J^i(V))\ \ \text{ and }\ \ f_2\, :\, J^i(V)\otimes
J^i(W) \, \longrightarrow\, J^i(V\otimes W)\, .
\end{equation}
Now we have the following composition of homomorphisms
$$
\text{Sym}^2(J^3(TX))\,\hookrightarrow\, J^3(TX)\otimes J^3(TX)\, \stackrel{f_1\otimes f_1}{\longrightarrow}\,
J^1(J^2(TX))\otimes J^1(J^2(TX))
$$
\begin{equation}\label{dh2}
\stackrel{f_2}{\longrightarrow}\, J^1(J^2(TX)\otimes J^2(TX))
\, \stackrel{J^1(f_\delta)}{\longrightarrow}\, J^1({\mathcal O}_X)\, \stackrel{d}{\longrightarrow}\, K_X
\end{equation}
(see \eqref{fi} for $f_1\otimes f_1$ and \eqref{fd} for $J^1(f_\delta)$); note
that the de Rham differential $d\, :\, {\mathcal O}_X\, \longrightarrow\, K_X$,
being a first order differential operator, is given by an ${\mathcal O}_X$--linear
homomorphism $J^1({\mathcal O}_X)\, \longrightarrow\, K_X$. Let
\begin{equation}\label{H2}
H_2\, :\, \text{Sym}^2(J^3(TX))\, \longrightarrow\, K_X
\end{equation}
be the homomorphism given by the composition of homomorphisms in \eqref{dh2}.

\begin{lemma}\label{lem1}
The homomorphism $H_1$ in \eqref{H1} coincides with the homomorphism $H_2$ in \eqref{H2}.
\end{lemma}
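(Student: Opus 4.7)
The plan is to reduce the identity $H_1 = H_2$ to a pointwise calculation in local projective coordinates, where both sides can be written explicitly and compared via Taylor expansion.

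Both $H_1$ and $H_2$ are $\mathcal{O}_X$--linear, so it suffices to prove $H_1(v, w) = H_2(v, w)$ at a single point $x \in X$ for all $v, w \in J^3(TX)_x$. I would fix a holomorphic coordinate $z$ on an open neighborhood $U \ni x$ that is compatible with $\delta$, and represent any 3-jet at $x$ by the 3-jet of $f(z)\partial_z$ for a holomorphic function $f$ defined near $x$. Since $\Delta_\delta$ is the transport of $\Delta_{\mathbb{P}^1}$, and since the polynomial vector fields of degree $\leq 2$ extend globally to $\mathbb{CP}^1$ and form the kernel of $\Delta_{\mathbb{P}^1}$, a direct check using \eqref{j1} gives $\Delta_\delta(f\partial_z) = f'''(z)\, dz^{\otimes 2}$. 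Consequently
$$
H_1\bigl([f\partial_z]^3_x,\, [g\partial_z]^3_x\bigr) \;=\; \bigl(f(x) g'''(x) + f'''(x) g(x)\bigr)\, dz.
$$

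For $H_2$, the key is the explicit form of $f_\delta$ in projective coordinates. By Lemma \ref{lem2}, the fiber $J^2(TX)_{z_0}$ is identified with $\mathrm{sl}(2, \mathbb{C})$ by extending a 2-jet at $z_0$ to the unique polynomial vector field on $\mathbb{CP}^1$ of degree $\leq 2$ having that 2-jet at $z_0$ (this is $\eta_2^{-1}$ composed with the identification in \eqref{liso}). Thus, for the 2-jets at $z_0$ of $f\partial_z$ and $g\partial_z$, one extends them to the Taylor polynomials $P(z;z_0)$ and $Q(z;z_0)$ of degree $\leq 2$, and $f_\delta$ applied to them is the invariant trace pairing of these polynomial vector fields, which is a constant function of $z$. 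Evaluating that constant at $z = z_0$, the pairing becomes (up to the normalization of the trace form) proportional to
$$
h(z_0) \;=\; f(z_0) g''(z_0) \;-\; f'(z_0) g'(z_0) \;+\; f''(z_0) g(z_0).
$$
Differentiating yields
$$
h'(z_0) \;=\; f g''' + f'g'' - f''g' - f'g'' + f'''g + f''g' \;=\; fg''' + f'''g,
$$
where the ``middle'' terms cancel by the antisymmetry of the coefficient of the cross term. Since $d: J^1(\mathcal{O}_X) \to K_X$ extracts the derivative part of a 1-jet, $H_2([f\partial_z]^3_x, [g\partial_z]^3_x) = h'(x)\, dz$, which matches $H_1$.

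The calculational content is the cancellation above, which encodes the invariance of the trace form under the $\mathrm{PSL}(2,\mathbb{C})$-action and the fact that the pairing of two global polynomial vector fields on $\mathbb{CP}^1$ is a constant. The principal obstacles are bookkeeping: (1) carefully tracking the local identification of $J^2(TX)$ with $\mathrm{ad}(P^0)$ supplied by Lemma \ref{lem2} in a projective chart, and (2) confirming that the normalization constants coming from the splitting \eqref{j1}, the de Rham $d: J^1(\mathcal{O}_X) \to K_X$, and the trace form $f_\delta$ on $\mathrm{sl}(2,\mathbb{C})$ are compatible. Since all three data are defined canonically once one chooses a projective chart, the proportionality constant is determined once and for all, and the equality $H_1 = H_2$ follows globally from its pointwise validity in every compatible coordinate.
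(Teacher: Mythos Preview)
Your argument is correct and is precisely the ``rather straightforward computation'' that the paper alludes to in the first sentence of its proof but then deliberately sets aside in favor of a representation--theoretic argument. Where you compute both $H_1$ and $H_2$ explicitly in a projective coordinate and observe the telescoping cancellation in $h'(z_0)$, the paper instead decomposes $\mathrm{Sym}^2(J^3(T\mathbb{CP}^1))$ via the $\mathrm{PSL}(2,\mathbb{C})$--equivariant splitting $J^3(T\mathbb{CP}^1)=J^2(T\mathbb{CP}^1)\oplus K^{\otimes 2}_{\mathbb{CP}^1}$ and shows, piece by piece, that the space of $\mathrm{PSL}(2,\mathbb{C})$--equivariant maps to $K_{\mathbb{CP}^1}$ is one--dimensional; hence $H_1$ and $H_2$ must be proportional, and one then checks the constant. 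Your route is more elementary and makes the identity visible as a derivative cancellation; the paper's route explains \emph{why} such an identity must hold, namely because there is only a one--dimensional space of candidates. Both proofs leave the final normalization check (that the proportionality constant is exactly $1$, given the specific trace form fixing $f_\delta$ and the symbol normalization in \eqref{j1}) somewhat implicit; your closing paragraph correctly flags this but does not actually carry it out, so if you want a complete argument you should pin down the constant by evaluating both sides on a single convenient pair of jets.
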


\begin{proof}
In view of the explicit description of $\Delta_\delta$ in \eqref{dde}, this is a rather straightforward
computation. We will give an alternative proof of it using representations of $\text{PSL}(2, {\mathbb C})$.

It suffices to prove that
\begin{enumerate}
\item $H_1\,=\, H_2$ for $X\, =\, {\mathbb C}{\mathbb P}^1$, and

\item the identification $H_1\,=\, H_2$ on ${\mathbb C}{\mathbb P}^1$ is equivariant for the action of
$\text{PSL}(2, {\mathbb C})$ on ${\mathbb C}{\mathbb P}^1$.
\end{enumerate}
Indeed, in that case using coordinate charts on $X$, compatible with the projective structure $\delta$, we 
may transport the identification $H_1\,=\, H_2$ (for ${\mathbb C}{\mathbb P}^1$) to open subsets of $X$; the
equivariance condition in (2) then
ensures that these locally defined identifications over $X$ patch together compatibly to give a global
identification $H_1\,=\, H_2$ over $X$.

Since the homomorphism $\eta_3\circ(\eta_2)^{-1}\, :\, J^2(T{\mathbb C}{\mathbb P}^1)
\,\longrightarrow\, J^3(T{\mathbb C}{\mathbb P}^1)$ (see \eqref{ee}) gives a $\text{PSL}(2, {\mathbb C})$--equivariant
splitting of the short exact sequence in \eqref{ee2}, we have
$$
J^3(T{\mathbb C}{\mathbb P}^1)\,=\, J^2(T{\mathbb C}{\mathbb P}^1)\oplus K^{\otimes 2}_{{\mathbb C}{\mathbb P}^1}\ .
$$
This implies that
\begin{equation}\label{jd}
\text{Sym}^2(J^3(T{\mathbb C}{\mathbb P}^1))\,=\, \text{Sym}^2(J^2(T{\mathbb C}{\mathbb P}^1)) \oplus
K^{\otimes 4}_{{\mathbb C}{\mathbb P}^1}\oplus (K^{\otimes 2}_{{\mathbb C}{\mathbb P}^1}\otimes
J^2(T{\mathbb C}{\mathbb P}^1)),
\end{equation}
and this isomorphism commutes with the actions of $\text{PSL}(2, {\mathbb C})$.
The vector bundle $\text{Sym}^2(J^2(T{\mathbb C}{\mathbb P}^1))$ is trivial, because
$J^2(T{\mathbb C}{\mathbb P}^1)$ is trivialized by $\eta_2$ in \eqref{ee}. Therefore,
\begin{equation}\label{u1}
H^0({\mathbb C}{\mathbb P}^1, \,\, \,\text{Hom}(\text{Sym}^2(J^2(T{\mathbb C}{\mathbb P}^1)),\, 
K_{{\mathbb C}{\mathbb P}^1}))\,=\, 0
\end{equation}
as the degree of $K_{{\mathbb C}{\mathbb P}^1}$ is negative. Also, we have
$$
H^0({\mathbb C}{\mathbb P}^1, \,\, \text{Hom}(K^{\otimes 4}_{{\mathbb C}{\mathbb P}^1},\, 
K_{{\mathbb C}{\mathbb P}^1}))\,=\, H^0({\mathbb C}{\mathbb P}^1, \,
(T{{\mathbb C}{\mathbb P}^1})^{\otimes 3}).
$$
This implies that
$$
H^0({\mathbb C}{\mathbb P}^1, \,\, \text{Hom}(K^{\otimes 4}_{{\mathbb C}{\mathbb P}^1},\,
K_{{\mathbb C}{\mathbb P}^1}))
\,=\, \text{Sym}^6({\mathbb C}^2)
$$
as a $\text{PSL}(2, {\mathbb C})$--module. Now taking invariants for the action of $\text{PSL}(2, {\mathbb C})$,
\begin{equation}\label{u2}
H^0({\mathbb C}{\mathbb P}^1, \,\, \text{Hom}(K^{\otimes 4}_{{\mathbb C}{\mathbb P}^1},\, 
K_{{\mathbb C}{\mathbb P}^1}))^{\text{PSL}(2, {\mathbb C})}\,=\,0.
\end{equation}

Next, we have
$$
H^0({\mathbb C}{\mathbb P}^1,\,\,\, \text{Hom}(K^{\otimes 2}_{{\mathbb C}{\mathbb P}^1}\otimes
J^2(T{\mathbb C}{\mathbb P}^1),\, K_{{\mathbb C}{\mathbb P}^1}))
$$
$$
=\,
H^0(X,\, J^2(T{\mathbb C}{\mathbb P}^1))^*\otimes H^0(X,\, \text{Hom}(K^{\otimes 2}_{{\mathbb C}{\mathbb P}^1},\,
K_{{\mathbb C}{\mathbb P}^1}))
$$
because $J^2(T{\mathbb C}{\mathbb P}^1)$ is the trivial bundle ${\mathbb C}{\mathbb P}^1\times H^0(X,\,
J^2(T{\mathbb C}{\mathbb P}^1))$ (trivialized by $\eta_2$ in \eqref{ee}). Consequently, we have
$$
H^0({\mathbb C}{\mathbb P}^1,\, \text{Hom}(K^{\otimes 2}_{{\mathbb C}{\mathbb P}^1}\otimes
J^2(T{\mathbb C}{\mathbb P}^1),\, K_{{\mathbb C}{\mathbb P}^1}))\,=\,
H^0(X,\, J^2(T{\mathbb C}{\mathbb P}^1))^*\otimes H^0(X,\, T{\mathbb C}{\mathbb P}^1)
$$
$$
\,=\,\text{sl}(2,{\mathbb C})^*\otimes \text{sl}(2,{\mathbb C}) \,=\,
\text{sl}(2,{\mathbb C})^*\otimes \text{sl}(2,{\mathbb C})^*
$$
as a $\text{PSL}(2, {\mathbb C})$--module. Now taking invariants for the action of $\text{PSL}(2, {\mathbb C})$,
\begin{equation}\label{u3}
H^0({\mathbb C}{\mathbb P}^1,\, 
\text{Hom}(K^{\otimes 2}_{{\mathbb C}{\mathbb P}^1}\otimes
J^2(T{\mathbb C}{\mathbb P}^1),\, K_{{\mathbb C}{\mathbb P}^1}))^{\text{PSL}(2, {\mathbb C})}
\,=\,{\mathbb C}.
\end{equation}
Note that $1\, \in\, (\text{sl}(2,{\mathbb C})^*\otimes \text{sl}(2,{\mathbb C})^*)^{\text{PSL}(2, {\mathbb C})}$
is given by the map $\text{sl}(2,{\mathbb C})\otimes \text{sl}(2,{\mathbb C})\, \longrightarrow\, \mathbb C$
defined by $A\otimes B\, \longrightarrow\, \text{trace}(AB)$.

In view of \eqref{u1}, \eqref{u2} and \eqref{u3}, from \eqref{jd} we conclude that
$$
H^0({\mathbb C}{\mathbb P}^1,\, \, \text{Hom}(\text{Sym}^2(J^3(T{\mathbb C}{\mathbb P}^1)),\,
K_X))^{\text{PSL}(2, {\mathbb C})} \,=\, {\mathbb C}.
$$
In other words, up to multiplication by a scalar, there is a unique nonzero $\text{PSL}(2, {\mathbb
C})$--equivariant homomorphism from $\text{Sym}^2(J^3(T{\mathbb C}{\mathbb P}^1))$ to $K_X$. Hence the
homomorphisms $H_1$ and $H_2$ coincide up to multiplication by a scalar when $X\,=\, {\mathbb C}{\mathbb P}^1$.
It is now easy to see that $H_1\,=\, H_2$ when $X\,=\, {\mathbb C}{\mathbb P}^1$.
\end{proof}

Take two distinct point $a,\, b\, \in\, X$. Let $U_a\, :=\, X\setminus\{b\}$ and $U_b\, :=\, X\setminus\{a\}$ 
be the open subsets; define $U\, :=\, U_a\cap U_b$. The $1$-cocycles for ${\mathcal B}_\bullet$ consists of
$$
(\theta,\, \omega_a,\, \omega_b) \, \in\, H^0(U,\, TU)\oplus H^0(U_a,\, K^{\otimes 2}_{U_a})
\oplus H^0(U_b,\, K^{\otimes 2}_{U_b})
$$
such that $\omega_a-\omega_b\, =\, \Delta_\delta(\theta)$, where $\Delta_\delta$ is the differential
operator in \eqref{delta}. A cocycle $(\theta,\, \omega_a,\, \omega_b)$ is
a $1$-coboundary if there is an element
$$
(\theta_a,\, \theta_b)\, \in\, H^0(U_b,\, TU_a)\oplus H^0(U_b,\, TU_b)
$$
such that $\theta\, =\, \theta_a-\theta_b$, $\omega_a\,=\, \Delta_\delta(\theta_a)$ and
$\omega_b\,=\, \Delta_\delta(\theta_b)$. The hypercohomology ${\mathbb H}^1({\mathcal B}_\bullet)$ (see \eqref{eb})
is identified with the quotient of $1$-cocycles by the $1$-coboundaries.

The residue of any $\alpha\, \in\, H^0(U,\, K_{U})$ at the point $a\,\in\, X$
will be denoted by $\text{Res}_a(\alpha)$.

Take two $1$-cocycles $\mathbf{c}^1\, :=\, (\theta^1,\, \omega^1_a,\, \omega^1_b)$ and
$\mathbf{c}^2\, :=\, (\theta^2,\, \omega^2_a,\, \omega^2_b)$ for ${\mathcal B}_\bullet$ in \eqref{eb}.
So we have $$\theta^1\cdot \omega^2_b,\,\,\, \theta^2\cdot \omega^1_a\,\, \in\,\, H^0(U,\, K_{U})\, .$$
Define
\begin{equation}\label{ttd}
\widehat{\Theta}'(\mathbf{c}^1,\, \mathbf{c}^2)\,:=\, \text{Res}_a(\theta^1\cdot \omega^2_b+\theta^2\cdot \omega^1_a)
\, \in\, \mathbb C,
\end{equation}
where ``$\cdot$'' is the natural pairing $TX\otimes K^{\otimes 2}_X\,\longrightarrow\, K_X$.

\begin{theorem}\label{thm1}
If $\mathbf{c}^1$ in \eqref{ttd} is a $1$-coboundary, then $\widehat{\Theta}'(\mathbf{c}^1,\, \mathbf{c}^2)\,=\,0$.
\end{theorem}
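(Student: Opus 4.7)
The plan is to unpack the coboundary hypothesis and reduce the right-hand side of \eqref{ttd} to a sum of residues that vanish term by term, using only the cocycle relation for $\mathbf{c}^2$, Lemma~\ref{lem1}, and the residue theorem on the compact Riemann surface $X$.

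First I would write out the coboundary data: by hypothesis there exist $\theta^1_a \in H^0(U_a,\,TU_a)$ and $\theta^1_b \in H^0(U_b,\,TU_b)$ with $\theta^1 \,=\, \theta^1_a - \theta^1_b$ on $U$, $\omega^1_a \,=\, \Delta_\delta(\theta^1_a)$, and $\omega^1_b \,=\, \Delta_\delta(\theta^1_b)$. Substituting into the formula for $\widehat{\Theta}'(\mathbf{c}^1,\,\mathbf{c}^2)$ produces three summands: $\mathrm{Res}_a(\theta^1_a \cdot \omega^2_b)$, $-\mathrm{Res}_a(\theta^1_b \cdot \omega^2_b)$, and $\mathrm{Res}_a(\theta^2 \cdot \Delta_\delta(\theta^1_a))$.

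The middle summand is eliminated immediately: $\theta^1_b \cdot \omega^2_b$ lies in $H^0(U_b,\,K_{U_b})$, so it extends to a meromorphic $1$-form on the compact surface $X$ whose only possible pole is at $a$; by the residue theorem on $X$ the sum of all its residues is $0$, and hence $\mathrm{Res}_a(\theta^1_b \cdot \omega^2_b)\,=\,0$. For the outer two summands, I would apply Lemma~\ref{lem1} to the pair $(\theta^1_a,\,\theta^2)$ of local vector fields on $U$: the lemma yields an identity of the form
\[
\theta^1_a\cdot \Delta_\delta(\theta^2) + \theta^2\cdot \Delta_\delta(\theta^1_a) \,=\, c\cdot d\bigl(f_\delta(j^2\theta^1_a,\,j^2\theta^2)\bigr)
\]
for some nonzero scalar $c$, whose right-hand side is $d$ of a meromorphic function and so has vanishing residue at every point of $X$. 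Combining this with the cocycle relation $\Delta_\delta(\theta^2) \,=\, \omega^2_a-\omega^2_b$ for $\mathbf{c}^2$ on $U$, and with the fact that $\theta^1_a\cdot \omega^2_a$ is holomorphic at $a$ (both factors being sections over $U_a\ni a$, hence contributing no residue at $a$), the two outer residues combine to zero.

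The main obstacle is the final sign and factor bookkeeping: one has to verify that the Lemma~\ref{lem1} identity and the cocycle relation interlock to give a genuine cancellation rather than a doubling. This check is essentially the assertion that $\widehat{\Theta}'$ is the pairing coming from the trace form $f_\delta$ on the local system $\Lambda_\delta \cong \underline{\mathrm{ad}}(P^0)$ (Lemma~\ref{lem2}) via Serre duality, a pairing for which coboundary invariance is automatic by construction.
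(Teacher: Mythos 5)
Your proposal is essentially the paper's own proof: the same decomposition into three residue terms, the residue theorem on $X$ disposing of $\text{Res}_a(\theta^1_b\cdot\omega^2_b)$, holomorphy at $a$ disposing of $\theta^1_a\cdot\omega^2_a$, and Lemma \ref{lem1} (via the exactness of $H_2$) killing the remaining combination $\theta^1_a\cdot\Delta_\delta(\theta^2)+\theta^2\cdot\Delta_\delta(\theta^1_a)$. The sign bookkeeping you flag is a genuine wrinkle---with the cocycle convention $\omega^2_a-\omega^2_b=\Delta_\delta(\theta^2)$ taken literally one lands on the antisymmetric rather than the symmetric combination---but the paper's own proof silently substitutes $\omega^2_b=\omega^2_a+\Delta_\delta(\theta^2)$ at exactly this step, so this is a sign-convention issue inherited from the source rather than a defect in your argument.
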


\begin{proof}
Since $\mathbf{c}^1\, :=\, (\theta^1,\, \omega^1_a,\, \omega^1_b)$ is a $1$-coboundary, there is
$$
(\theta_a,\, \theta_b)\, \in\, H^0(U_b,\, TU_a)\oplus H^0(U_b,\, TU_b)
$$
such that $\theta^1\, =\, \theta_a-\theta_b$, $\omega^1_a\,=\, \Delta_\delta(\theta_a)$ and $\omega^1_b\,=\,
\Delta_\delta(\theta_b)$. So $\theta^1\cdot \omega^2_b+\theta^2\cdot \omega^1_a$ in \eqref{ttd} has the
expression
$$
\theta^1\cdot \omega^2_b+\theta^2\cdot \omega^1_a\,=\, (\theta_a-\theta_b)\cdot \omega^2_b+
\theta^2\cdot \Delta_\delta(\theta_a)\,=\,\theta_a\cdot \omega^2_b +
\theta^2\cdot \Delta_\delta(\theta_a) - \theta_b\cdot \omega^2_b
$$
\begin{equation}\label{tz1}
=\, \theta_a\cdot (\omega^2_a + \Delta_\delta(\theta^2))+
\theta^2\cdot \Delta_\delta(\theta_a) - \theta_b\cdot \omega^2_b\,=\,
\theta_a\cdot \omega^2_a - \theta_b\cdot \omega^2_b +H_1(\theta_a,\, \theta^2)\, ,
\end{equation}
where $H_1$ is the homomorphism in \eqref{H1}. Since the residues of
$\theta_a\cdot \omega^2_a$ and $\theta_b\cdot \omega^2_b$ vanish at the point $a$ (and at $b$),
\begin{equation}\label{tz2}
\widehat{\Theta}'(\mathbf{c}^1,\, \mathbf{c}^2)\,:=\, \text{Res}_a(H_1(\theta_a,\, \theta^2))
\end{equation}
(see \eqref{ttd} and \eqref{tz1}). From Lemma \ref{lem1},
\begin{equation}\label{pc}
\text{Res}_a(H_1(\theta_a,\, \theta^2))\,=\, \text{Res}_a(H_2(\theta_a,\, \theta^2))\, ;
\end{equation}
on the other hand, $H_2(\theta_a,\, \theta^2)$ is exact on $U\, :=\, U_a\cap U_b$ (the
final homomorphism in \eqref{dh2} is the de Rham differential). Hence we have
$$
\text{Res}_a(H_2(\theta_a,\, \theta^2))\,=\, 0\, .
$$
Now from \eqref{tz2} we know that $\widehat{\Theta}'(\mathbf{c}^1,\, \mathbf{c}^2)\,=\,0$.
\end{proof}

Theorem \ref{thm1} has the following immediate consequence.

\begin{corollary}\label{cor1}
The pairing $\widehat{\Theta}'$ in \eqref{ttd} produces a alternating form
$$
\widehat{\Theta}(\delta)\, :\,\,\, \bigwedge\nolimits^2 {\mathbb H}^1({\mathcal B}_\bullet)\,
\longrightarrow\,\mathbb C\, .
$$
\end{corollary}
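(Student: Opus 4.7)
The plan is to deduce this corollary from Theorem~\ref{thm1} together with a parallel vanishing, and then antisymmetrize.

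First, I would establish the analog of Theorem~\ref{thm1} with the roles of the two cocycles interchanged: if $\mathbf{c}^2$ is a $1$-coboundary, then $\widehat{\Theta}'(\mathbf{c}^1,\,\mathbf{c}^2)\,=\,0$. The proof follows the same recipe as Theorem~\ref{thm1}: write $\theta^2\,=\,\theta'_a-\theta'_b$, $\omega^2_a\,=\,\Delta_\delta(\theta'_a)$, $\omega^2_b\,=\,\Delta_\delta(\theta'_b)$, expand using the cocycle relation for $\mathbf{c}^1$, and reduce to three kinds of residue contributions. Terms of the form $\text{Res}_a(\theta'_a\cdot \omega^1_a)$ vanish because both factors extend holomorphically to $U_a$, hence are regular at $a$; terms of the form $\text{Res}_a(\theta'_b\cdot \omega^1_b)$ vanish by the residue theorem on $X$, since the product is a rational section of $K_X$ whose only possible pole on $X$ is at $a$, and the sum of its residues vanishes; and the residues involving $H_1$ applied to pairs of vector fields vanish by Lemma~\ref{lem1} combined with the exactness of the de Rham differential appearing in $H_2$, exactly as in the proof of Theorem~\ref{thm1}.

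Together, Theorem~\ref{thm1} and this parallel statement show that $\widehat{\Theta}'$ descends to a well-defined bilinear form on ${\mathbb H}^1({\mathcal B}_\bullet)\otimes {\mathbb H}^1({\mathcal B}_\bullet)$. To obtain the alternating form $\widehat{\Theta}(\delta)$, I would antisymmetrize by setting
$$\widehat{\Theta}(\delta)(\mathbf{c}^1\wedge \mathbf{c}^2)\,:=\,\tfrac{1}{2}\bigl(\widehat{\Theta}'(\mathbf{c}^1,\,\mathbf{c}^2)-\widehat{\Theta}'(\mathbf{c}^2,\,\mathbf{c}^1)\bigr).$$
This form is alternating by construction and well-defined on cohomology classes by the two vanishings established above; hence it factors through $\bigwedge\nolimits^2 {\mathbb H}^1({\mathcal B}_\bullet)$, yielding the claimed form.

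The main technical point lies in the parallel argument of the first paragraph. The definition of $\widehat{\Theta}'$ is superficially asymmetric in $\mathbf{c}^1$ and $\mathbf{c}^2$ (the cocycle $\mathbf{c}^1$ couples with $\omega^2_b$ while $\mathbf{c}^2$ couples with $\omega^1_a$), so the bookkeeping must be redone rather than invoked by a direct symmetry. However, the three structural inputs used in Theorem~\ref{thm1}—regularity of coboundary sections on $U_a$ or $U_b$, the residue theorem on the compact curve $X$ applied to rational $1$-forms with only one possible pole, and Lemma~\ref{lem1} together with the exactness property of $H_2$—are symmetric under interchanging $\mathbf{c}^1 \leftrightarrow \mathbf{c}^2$ and $a \leftrightarrow b$, and carry the argument through.
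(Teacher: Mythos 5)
Your strategy --- supply the vanishing for coboundaries in the \emph{second} slot, which Theorem \ref{thm1} does not cover, and then pass to an alternating form --- is the right one, and it correctly identifies what the paper's one-line justification omits. But the decisive step of your parallel computation does not close as you describe it. Writing $\mathbf{c}^2$ as the coboundary of $(\theta'_a,\theta'_b)$ and expanding $\widehat{\Theta}'(\mathbf{c}^1,\mathbf{c}^2)$, the terms $\mathrm{Res}_a(\theta'_a\cdot\omega^1_a)$ and $\mathrm{Res}_a(\theta'_b\cdot\omega^1_b)$ do vanish for the reasons you give, but with the conventions as printed (cocycle condition $\omega^1_a-\omega^1_b=\Delta_\delta(\theta^1)$ and the plus sign in \eqref{ttd}) the surviving term is
$$
\mathrm{Res}_a\bigl(\theta^1\cdot \Delta_\delta(\theta'_b)\,-\,\theta'_b\cdot \Delta_\delta(\theta^1)\bigr),
$$
the \emph{anti}symmetric combination, whereas $H_1$ is the symmetric one $p_0(v)\cdot\Delta_\delta(w)+p_0(w)\cdot\Delta_\delta(v)$. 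Lemma \ref{lem1} kills only the symmetric combination (that is the one identified with an exact form via $H_2$); the antisymmetric combination has residue $-2\,\mathrm{Res}_a(\theta'_b\cdot\Delta_\delta(\theta^1))$, which nothing in the paper controls. So the sentence ``the residues involving $H_1$ applied to pairs of vector fields vanish by Lemma \ref{lem1}'' is exactly the point where the signs must be tracked, and with the stated signs the reduction to $H_1$ fails. (The same tension is already present in the paper: the proof of Theorem \ref{thm1} substitutes $\omega^2_b=\omega^2_a+\Delta_\delta(\theta^2)$, the opposite of the stated cocycle condition. With one consistent sign choice --- e.g.\ a minus sign in \eqref{ttd} --- the symmetric combination $H_1$ genuinely appears and your computation, like the paper's, goes through.)

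Two further points. First, once the sign is fixed, the economical route is to prove directly that $\widehat{\Theta}'$ is antisymmetric on cocycles: using the two cocycle relations, $\widehat{\Theta}'(\mathbf{c}^1,\mathbf{c}^2)+\widehat{\Theta}'(\mathbf{c}^2,\mathbf{c}^1)$ collapses to $\pm\mathrm{Res}_a(H_1(\theta^1,\theta^2))=0$ by Lemma \ref{lem1}; combined with Theorem \ref{thm1} this yields the second-slot vanishing for free and makes a separate coboundary computation unnecessary --- this is presumably what ``immediate consequence'' is meant to convey. Second, your final antisymmetrization is not innocent: the corollary (and the later identification with $\Theta_g$) needs $\widehat{\Theta}(\delta)$ to be $\widehat{\Theta}'$ itself evaluated on representatives. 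If $\widehat{\Theta}'$ were not already antisymmetric, your $\tfrac12\bigl(\widehat{\Theta}'(\mathbf{c}^1,\mathbf{c}^2)-\widehat{\Theta}'(\mathbf{c}^2,\mathbf{c}^1)\bigr)$ would be a different form; indeed, with the printed plus sign one computes $\widehat{\Theta}'(\alpha_1(\omega),\mathbf{c})=\widehat{\Theta}'(\mathbf{c},\alpha_1(\omega))=\mathrm{Res}_a(\theta\cdot\omega)$ for global $\omega\in H^0(X,K_X^{\otimes 2})$, so the antisymmetrization annihilates $\alpha_1(H^0(X,K_X^{\otimes 2}))$ against everything and cannot be the (nondegenerate) symplectic form. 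Antisymmetry of $\widehat{\Theta}'$ therefore has to be proved, not sidestepped.
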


Note that $\widehat{\Theta}$ in Corollary \ref{cor1} produces a $2$--form on the moduli
space ${\mathcal P}_g$ in \eqref{v2}. From
its construction it is evident that $\widehat{\Theta}$ is an algebraic form on ${\mathcal P}_g$.

{}From the construction of the $2$--form $\widehat{\Theta}$ on ${\mathcal P}_g$ it is straightforward
to deduce that it coincides with the symplectic form $\Theta_g$ in \eqref{e12}.


\end{document}